\theoremstyle{plain}
\newtheorem{thm}{Theorem}[section]
\newtheorem{lem}[thm]{Lemma}
\newtheorem{prop}[thm]{Proposition}
\newtheorem*{thm*}{Theorem}
\newtheorem*{problem*}{Problem}
\theoremstyle{definition}
\newtheorem{defn}[thm]{Definition}
\theoremstyle{remark}
\newtheorem{rem}[thm]{Remark}
\numberwithin{equation}{section}
\newcommand{\Spec}{\operatorname{Spec}}
\newcommand{\pone}{{\mathbb P}^1}
\newcommand{\SB}{\operatorname{SB}}
\newcommand{\id}{\operatorname{id}}
\newcommand{\gr}{\operatorname{Gr}}
\begin{document}

\title[$K_1$-Zero-Cycles for Generalized Sever-Brauer Varieties]{The Group of $K_1$-Zero-Cycles on the Second Generalized Severi-Brauer Variety of an Algebra of Index 4}

\author{Patrick K. McFaddin}
\address{Department of Mathematics, University of South Carolina, 
Columbia, SC 29208}
\email{pkmcfaddin@gmail.com}
\urladdr{\url{http://mcfaddin.github.io/}}
\thanks{Partially supported by NSF Grant DMS-1151252}

\subjclass[2010]{Primary \href{http://www.ams.org/msc/msc2010.html?t=16K20&s=14c25&btn=Search&ls=Ct}{14C25}, Secondary \href{http://www.ams.org/msc/msc2010.html?t=&s=16k20&btn=Search&ls=s}{16K20}}

\maketitle
\addtocounter{section}{0}

\begin{abstract}
In this manuscript, it is shown that the group of $K_1$-zero-cycles on the second generalized Severi-Brauer variety of an algebra $A$ of index 4 is given by elements of the group $K_1(A)$ together with a square-root of their reduced norm.  Utilizing results of Krashen concerning exceptional isomorphisms, we translate our problem to the computation of cycles on involution varieties.  Work of Chernousov and Merkurjev then gives a means of describing such cycles in terms of Clifford and spin groups and corresponding $R$-equivalence classes.  We complete our computation by giving an explicit description of these algebraic groups.
\end{abstract}

\section{Introduction}

The theory of algebraic cycles on homogenous varieties has seen many useful applications to the study of central simple algebras, quadratic forms, and Galois cohomology.  Significant results include the Merkurjev-Suslin Theorem \cite{MS82} and the Milnor and Bloch-Kato Conjectures (now theorems of Orlov-Vishik-Voevodksy and Voevodsky-Rost-Weibel) \cite{Voe}.  Despite these successes, a general description of Chow groups (with coefficients) remains elusive, and computations of these groups are done in various cases, e.g., \cite{ChernMerkSpin, ChernMerk, KrashZeroCycles, Mer95, MerkRat, MerkSus, PSZ, Swan}.

The impetus for the investigation presented here is a result of Merkurjev and Suslin concerning cycles on Severi-Brauer varieties- twisted forms of projective space.  For a variety $X$ over a field $F$, there is a complex \cite{Quillen, Rost}$$\cdots \to \coprod _{x \in X_{(p+1)}} K_{p+q+1}(F(x)) \to \coprod _{x \in X_{(p)}} K_{p+q}(F(x)) \to \coprod_{x \in X_{(p-1)}} K_{p+q-1} (F(x)) \to \cdots $$ where $X_{(i)}$ is the set of points of $X$ of dimension $i$, $K_i$ are Quillen $K$-groups, and the differentials are given component-wise by residue homomorphisms.  The $p^{\text{th}}$ homology group of this complex, i.e., homology at the middle term above, is denoted $A_p(X, K_q)$.  Taking $X$ to be the Severi-Brauer variety associated to a central simple algebra $A$, it is shown in \cite{MerkSus} that the group $A_0(X, K_1)$ of $K_1$-zero-cycles is given by $K_1(A)$.  

Here we provide a similar result for cycles on generalized Severi-Brauer varieties, i.e., twisted forms of Grassmannians.  We show that the group of $K_1$-zero-cycles on the second generalized Severi-Brauer variety of an algebra $A$ of index 4 is given by elements of $K_1(A)$ together with a square root of their reduced norm.

\begin{thm*}$\operatorname{(\ref{thm:mainthm})}$
Let $A$ be a central simple algebra of index 4 and arbitrary degree over a field $F$ of characteristic not 2, and let $X$ be the second generalized Severi-Brauer variety of $A$.  There is a group isomorphism $$A_0(X, K_1) \simeq  \{ (x, \alpha) \in K_1(A) \times F^{\times} \mid \operatorname{Nrd}_A(x) = \alpha ^2\}.$$
\end{thm*}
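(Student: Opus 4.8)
The plan is to carry out the three-step reduction sketched in the abstract: replace the generalized Severi--Brauer variety by an involution variety via an exceptional isomorphism, invoke the Chernousov--Merkurjev description of $K_1$-zero-cycles on such varieties in terms of spinor and Clifford groups, and finish with an explicit computation of the groups involved. I would first reduce to the case $\deg A = 4$, so that $A$ is a division algebra of degree $4$. This is at least consistent with the shape of the answer, since $K_1(A)\cong K_1(D)$ and $\operatorname{Nrd}$ are Morita-invariant, so the target group depends only on the Brauer class of $A$; the corresponding insensitivity of $A_0(\SB(2,A),K_1)$ under passage to a matrix algebra is a point I would need to justify separately (e.g.\ by comparing Rost cycle complexes), and I flag it as one possible obstacle. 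With $\deg A = 4$, Krashen's exceptional isomorphism of type $A_3 = D_3$ identifies $X = \SB(2,A)$ with the involution variety of $(B,\sigma)$, where $B = \lambda^2 A$ has degree $6$ and $\sigma$ is the canonical orthogonal involution induced by the Plücker pairing $\Lambda^2 V\times\Lambda^2 V\to\Lambda^4 V\cong F$ on the underlying $4$-dimensional space $V$. Here $\sigma$ has trivial discriminant, so the even Clifford algebra splits as $C(B,\sigma)\cong C_+\times C_-$ with $C_\pm$ central simple of degree $4$, Brauer-equivalent to $A$ and $A^{\mathrm{op}}$; this is the algebraic manifestation of $\operatorname{Spin}_6 = \operatorname{SL}_4$ and its two half-spin representations.

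Second, I would feed the involution variety into the Chernousov--Merkurjev machinery, which describes $A_0$ of such varieties in terms of $R$-equivalence classes of the associated spinor group. Under $\operatorname{Spin}(B,\sigma)\cong\operatorname{SL}_1(C_+)$ with $C_+$ Brauer-equivalent to $A$, this supplies the reduced-norm-one information via the identification $\operatorname{SL}_1(A)(F)/R\cong SK_1(A)$. The remaining data — an element $x\in K_1(C_+)\cong K_1(A)$ together with a class in $F^\times$ — I expect to arise much as in the Merkurjev--Suslin computation for the ordinary Severi--Brauer variety, now carrying an extra spinor-norm component peculiar to the quadric. In particular the full group $K_1(A)$ should be recovered, including its $SK_1(A)$ part, whose reduced-norm-one elements will pair with spinor norm $\pm 1$. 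The precise form of the Chernousov--Merkurjev isomorphism in this low-rank setting is what organizes these contributions into a single group.

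The heart of the matter, and the step I expect to be hardest, is to make these groups explicit and thereby pin down the compatibility between the $K_1(A)$-component and the $F^\times$-component as exactly the relation $\operatorname{Nrd}_A(x) = \alpha^2$. The square root has a clean origin, which may be checked over a splitting field: under the isogeny relating $\operatorname{GSpin}_6$ to $\operatorname{GL}_4$, an element acts on the vector representation $\Lambda^2 V$ as a similitude whose multiplier is the determinant on $V$, while its spinor norm $\alpha$ satisfies $\det = \alpha^2$ — the scalar $t\in\gm\subset\operatorname{GSpin}_6$ has spinor norm $t^2$ but determinant $t^4$ on the $4$-dimensional half-spin representation. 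Twisting by $A$, the determinant becomes $\operatorname{Nrd}_A$ and one obtains $\operatorname{Nrd}_A(x)=\alpha^2$, with $\alpha$ the spinor norm attached to the cycle. Assembling the $K_1(A)$-component $x$ and the spinor-norm component $\alpha$ under this compatibility then identifies $A_0(X,K_1)$ with $\{(x,\alpha)\in K_1(A)\times F^\times : \operatorname{Nrd}_A(x)=\alpha^2\}$. Beyond bookkeeping the factor of $2$, the principal difficulties I anticipate are making the Chernousov--Merkurjev isomorphism fully explicit here and carrying out the degree-$4$ reduction.
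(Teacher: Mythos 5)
Your overall strategy coincides with the paper's: reduce to the degree-4 division algebra, identify $\SB_2$ of that algebra with the involution variety of a degree-6 algebra $(B,\sigma)$ Brauer-equivalent to $A^{\otimes 2}$ via Krashen's $A_3=D_3$ exceptional isomorphism, apply Chernousov--Merkurjev to get $A_0(\operatorname{IV}(B,\sigma),K_1)\simeq \Gamma(B,\sigma)/R\operatorname{Spin}(B,\sigma)$, and make the groups explicit. Your identification of the Clifford algebra of $(B,\sigma)$ with $A\times A^{\mathrm{op}}$ and your explanation of the square-root condition are also consistent with the paper, though the paper reaches the relation $\operatorname{Nrd}_A(x)=\alpha^2$ without any split $\operatorname{GSpin}_6$ computation: it uses the exceptional identifications $\Gamma(B,\sigma)=\operatorname{SGU}(A\times A^{\mathrm{op}},\varepsilon)=\{(x,\alpha)\in A^\times\times F^\times \mid \operatorname{Nrd}_A(x)=\alpha^{m}\}$ with $m=2$, and $\operatorname{Spin}(B,\sigma)=\operatorname{SU}(A\times A^{\mathrm{op}},\varepsilon)=\operatorname{SL}_1(A)$, then quotients by $R\operatorname{SL}_1(A)$ and invokes Voskresenski\u{i}'s isomorphism $\operatorname{GL}_1(A)/R\operatorname{SL}_1(A)\simeq K_1(A)$. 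That route is cleaner and pins down the compatibility you describe only impressionistically.

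The one genuine gap is the step you flag yourself: the reduction from an index-4 algebra $A=M_n(D)$ of arbitrary degree to the degree-4 algebra $D$. This is not a formal Morita or cycle-complex comparison, since $\SB_2(A)$ and $\SB_2(D)$ are different varieties of different dimensions and $A_0(-,K_1)$ is not a priori insensitive to that difference. The paper's Lemma 4.1 handles it by fixing an ideal $J\subset A$ of reduced dimension $4$ with $D\simeq e_JAe_J$ and studying the rational map $\SB_2(A)\dashrightarrow\SB_2(J)=\SB_2(D)$, $I\mapsto e_JI$; the generic fiber over $L=F(\SB_2(D))$ is $\SB_2(A_L)$, which is rational over $L$ because $\operatorname{ind}(A_L)\mid 2$ (Blanchet), so $\SB_2(A)$ is stably birational to $\SB_2(D)$, and one concludes by the birational invariance of $A_0(-,K_1)$ for smooth projective varieties together with the projective bundle theorem. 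Without an argument of this kind your proof establishes only the case $\deg A=4$.
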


The paper is organized as follows.  In Section 2, we recall some definitions and facts about central simple algebras, involutions, and Severi-Brauer and involution varieties.  We then define cycle modules and $K$-cohomology.  

In Section 3, we introduce certain algebraic groups arising from algebras with involution. These include various unitary groups, the Clifford group, and spin group.  We also define the notion of $R$-equivalence and its use in relating these groups to $K$-theory and $K$-cohomology, using results of Voskresenski\u{i} and Chernousov-Merkurjev.

In Section 4, we prove Lemma~\ref{lem:red}, which gives a reduction to algebras of square degree.  Given an algebra $A$ of index 4, we reduce our computation of the group of $K_1$-zero-cycles on the second generalized Severi-Brauer variety of $A$ to that of the second generalized Severi-Brauer variety of $D$, where $A = M_n(D)$.  The fact that $D$ has degree 4 allows one to utilize a result of Krashen relating this variety to a certain involution variety.

In Section 5, we prove our main result.  After transferring our computation into the realm of involution varieties, we utilize work of Chernousov and Merkurjev expressing the group of $K_1$-zero-cycles in terms of the Clifford and spin groups of a certain algebra with involution.  An analysis of these groups completes the proof.

\subsection*{Acknowledgments}
I would like to express my sincere gratitude to Danny Krashen for posing this problem to me and for all of the helpful suggestions he has provided.  I would also like to thank the anonymous referee for useful remarks and suggestions.

\section{Preliminaries and Notation}

Throughout, $F$ denotes an infinite perfect field of characteristic unequal to 2.  While this assumption on the characteristic is only necessary for the proofs of Proposition~\ref{prop:deg4} and Theorem~\ref{thm:mainthm}, it aids in adding clarity to the exposition, eliminating the need for the use of quadratic pairs.  If $L/F$ is a field extension and $A$ is an $F$-algebra, we write $A_L:= A \otimes _F L$.  By a \emph{scheme} we will mean a separated scheme of finite type over the field $F$.  A variety is an integral scheme.  If $X$ and $Y$ are $F$-varieties, we write $X_Y : = X\times _{\Spec F} Y$.

A \emph{central simple algebra} over $F$ is a finite-dimensional $F$-algebra with no two-sided ideals other than $(0)$ and $(1)$ and whose center is precisely $F$.  Unless otherwise stated, all algebras will be central simple over $F$.  Recall that the dimension of a central simple algebra $A$ is a square, and we define the \emph{degree} of $A$ to be $\text{deg}(A) = \sqrt{\dim A}$.  One may write $A = M_n(D)$ for a division algebra $D$, unique up to isomorphism, and we define the \emph{index} of $A$ to be $\text{ind}(A) = \text{deg}(D).$  We say two central simple algebras over $F$ are \emph{Brauer-equivalent} if their underlying division algebras are isomorphic.

By an \emph{algebra with involution} we will mean a pair $(A,\sigma)$ where $A$ is a central simple algebra and $\sigma: A \to A$ is an anti-automorphism satisfying $\sigma ^2 = \id _A$.  An involution \emph{of the first kind} satisfies $\sigma |_F = \id _F$, while an involution \emph{of the second kind} induces a nontrivial degree 2 automorphism of $F$.  We refer to involutions of the second kind as \emph{unitary involutions}.  An involution of the the first kind which is a twisted form of a symmetric bilinear form is \emph{orthogonal}.  Otherwise, it is \emph{symplectic}.  The assumption that $\text{char}(F) \neq 2$ implies that algebras with quadratic pair \cite[\S 5.B]{MerkBook} are simply algebras with orthogonal involution.

It will be useful to extend the notion of ``unitary involution" to include semi-simple $F$-algebras of the form $A_1 \times A_2$, where each $A_i$ is a central simple over $F$.  The center $L$ of an algebra with unitary involution $(A, \sigma)$ will generally be an \'{e}tale quadratic extension of $F$, i.e., either $L \simeq F \times F$ or $L/F$ is a separable quadratic field extension. In the first case, $A \simeq A_1 \times A_2$ as above, and in the second case $A$ is a central simple algebra over $L$.  We will refer to such an algebra as a \emph{central simple algebra with unitary involution}, even though the algebra is not necessarily simple and its center is not $F$ (see introduction to \cite[\S 2.B]{MerkBook}). 

\subsection{Severi-Brauer and Involution Varieties}

The following may be found in \cite{Blanch, MerkBook, KrashZeroCycles, Tao}.  Let $A$ be a central simple algebra of degree $n$.  For any integer $1 \leq k \leq n$, the $k^{\text{th}}$ generalized Severi-Brauer variety $\SB_k(A)$ of $A$ is the variety of right ideals of dimension $nk$ in $A$.   Such a varietiy is a twisted form of the Grassmannian $\gr(k, n)$ of $k$-dimensional subspaces of an $n$-dimensional vector space.  The variety $\SB_1(A) = \SB(A)$ is the usual Severi-Brauer variety of $A$, which is a twisted form of projective space.  For a field extension $L/F$, the variety $\SB_k(A)$ has an $L$-rational point if and only if $\text{ind}(A_L) \mid k$ \cite[Prop. 1.17]{MerkBook}.  We concern ourselves with the case $k = 2$, the second generalized Severi-Brauer variety of $A$.

Let $(A, \sigma)$ be an algebra with orthogonal involution and let $I$ be a right ideal of $A$.  The \emph{orthogonal ideal} of $I$ with respect to $\sigma$ is given by $$I^{\perp} = \{ x \in A \mid \sigma(x)y = 0 \text{ for } y \in I\}.$$  We say that an ideal $I$ is \emph{isotropic} if $I \subset I ^{\perp}$.  Let $\text{IV}(A,\sigma)$ denote the collection of isotropic ideals in $A$ of dimension $n$.  It is a subvariety of $\SB(A)$, with inclusion defined by forgetting the isotropy condition.  Just as $\SB(A)$ is a twisted form of projective space, $\text{IV}(A, \sigma)$ is a twisted form of a projective quadric.

\subsection{Cycle Modules}
Cycle modules were first introduced by M. Rost in \cite{Rost} and good references are \cite{GMS} and \cite{EKM} for the case of Milnor $K$-theory.  

\begin{defn} 
A \emph{cycle module} $M$ over $F$ is a function assigning to every field extension $L/F$ a graded abelian group $M(L) = M_*(L)$, which is a graded module over the Milnor $K$-theory ring $K_*^M(F)$ satisfying some data and compatibility axioms.  This data includes
\begin{enumerate}
\item For each field homomorphism $L \to E$ over $F$, there is a degree 0 homomorphism $r_{E/L}: M(L) \to M(E)$ called \emph{restriction}.
\item For each field homomorphism $L \to E$ over $F$, there is a degree 0 homomorphism $c_{E/L}: M(E) \to M(L)$ called \emph{corestriction} (or norm).
\item For each extension $L/F$ and each (rank 1) discrete valuation $v$ on $L$, there is a degree $-1$ homomorphism $\partial _{v}: M(L) \to M(\kappa(v))$ called the \emph{residue homomorphism}, where $\kappa(v)$ is the residue field of $v$.
\end{enumerate}
These homomorphisms are compatible with the corresponding maps in Milnor $K$-theory. See D1-D4, R1a-R3e, FD, and C of \cite[Def. 1.1, Def. 2.1]{Rost}.
\end{defn}

Let $X$ be an $F$-variety and $M$ a cycle module over $F$.  Utilizing this axiomatic framework, we set $$C_p(X, M_q) = \coprod _{x \in X_{(p)}} M_{p+q}(F(x)),$$ where $X_{(p)}$ denotes the collection of points of $X$ of dimension $p$, to obtain a complex $$ \cdots \xrightarrow{} C_{p+1}(X, M_{q}) \xrightarrow{d_X} C_p(X, M_q) \xrightarrow{d_X} C_{p-1}(X, M_{q}) \xrightarrow{} \cdots$$  with differentials $d_X$ induced by the homomorphisms $\partial _v$ associated to valuations of codimension 1 subvarieties.  This is often referred to as the \emph{Rost complex} \cite{EKM} or \emph{Gersten complex} \cite{Panin}.  We denote the homology group at the middle term by $A_p(X, M_q)$.  Our main focus will be the group $$A_0(X, K_1) = \operatorname{coker} \left( \coprod _{x\in X_{(1)}} K_2(F(x))\xrightarrow{d_X} \coprod _{x \in X_{(0)} } K_1(F(x)) \right)$$ of $K_1$-zero-cycles, where $K_*$ is the cycle module given by Quillen $K$-theory.  More concretely, we wish to describe the collection of equivalences classes of formal sums $\sum (\alpha, x)$, where $x$ is a closed point on $X$ and $\alpha \in F(x)^{\times}$.

\section{Involutions, Algebraic Groups, and $R$-Equivalence}  We define certain algebraic groups associated to central simple algebras with involution.  These groups are twisted forms of their more classical counterparts.  Their importance comes from the fact that algebras with involution of type $A_3$ are closely related to Clifford and spin groups associated to algebras with involution of type $D_3$, coming from the exceptional identification of the $A_3$ and $D_3$ Dynkin diagrams (this is made precise in Proposition~\ref{prop:excid}).  Furthermore, utilizing a result of Chernousov and Merkurjev stated below, these groups can be used to compute $K_1$-zero-cycles for involution varieties associated to algebras with involution which have index no more than 2.

An \emph{algebraic group} is a smooth affine group scheme.  We follow the same notational convention as \cite{ChernMerkSpin}, utilizing $\text{GL}_1(A)$, $\text{SL}_1(A)$, $\text{Spin}(A, \sigma)$, etc., to denote the groups of $F$-points of the corresponding algebraic groups $\textbf{GL}_1(A),$ $\textbf{SL}_1(A)$, $\textbf{Spin}(A, \sigma)$, etc.  We suppress reference to the scheme structure and focus only on the collections of $F$-points, although we continue to refer to them as ``algebraic groups."

\subsection{Involutions and Algebraic Groups}  A good reference for the following material is \cite{MerkBook}.  Let $A$ be a central simple algebra over $F$, and consider the algebraic group $\text{GL}_1(A) = A^{\times}$ of invertible elements in $A$, called the \emph{general linear group} of $A$.  The kernel of the reduced norm homomorphism $\text{Nrd}_A: \text{GL}_1(A) \to F^{\times}$ \cite[$\S$1.6]{MerkBook} is denoted $\text{SL}_1(A)$, called the \emph{special linear group} of $A$.

Let $(A, \sigma)$ be a central simple algebra with involution.  A \emph{similitude} of $(A, \sigma)$ is an element $g \in A$ satisfying $\sigma(g) g \in F^{\times}$.  The collection of all similitudes of $(A, \sigma)$ is denoted $\text{Sim}(A, \sigma)$.  The scalar $\mu(g) := \sigma(g) g$ is called the \emph{multiplier} of $g$.  The association $g \mapsto \mu(g)$ defines a group homomorphism $\mu: \text{Sim}(A, \sigma) \to F^{\times}$.  If the involution $\sigma$ is of unitary type, we denote $\text{Sim}(A, \sigma)$ by $\text{GU}(A, \sigma)$ and call it the \emph{general unitary group} of $(A, \sigma)$.

 Let $(A, \sigma)$ be an algebra with unitary involution, with $\text{deg}(A) = 2m$. Define the \emph{special general unitary} and \emph{special unitary} groups  $$\text{SGU}(A, \sigma) =\{g \in \text{GU}(A, \sigma) \mid \text{Nrd}_A(g) = \mu(g) ^ m\}$$ $$ \text{SU}(A, \sigma) = \{ u \in \text{GU}(A, \sigma)\mid \text{Nrd}_A(u) = 1\}.$$

In the case where $A$ has center $L \simeq F \times F$, there is a central simple algebra $E$ over $F$ such that $(A, \sigma) \simeq (E \times E^{\text{op}}, \varepsilon)$, where $\varepsilon$ is the involution which switches factors (the \emph{exchange} involution) \cite[Prop. 2.14]{MerkBook}.  The general, special general, and special unitary groups of $(A, \sigma)$ are then given by \cite[\S 14.2]{MerkBook} $$\text{GU}(E \times E^{\text{op}}, \varepsilon) = \{(x, \alpha(x^{-1})^{\text{op}}) \mid \alpha \in F^{\times}, x \in E^{\times}\} \simeq E^{\times} \times F^{\times}$$ $$\text{SGU}(E \times E^{\text{op}}, \varepsilon) = \{(x, \alpha) \in E^{\times} \times F^{\times} \mid \text{Nrd}_E(x) = \alpha ^m\}$$ $$\text{SU}(E\times E^{\text{op}}, \varepsilon) = \{x \in E^{\times} \mid \text{Nrd}_E (x) = 1\} = \text{SL}(E).$$

\subsection{Clifford and Spin Groups}  
Given an algebra with orthogonal involution $(A, \sigma)$, the \emph{Clifford algebra} $C(A, \sigma)$ is an $F$-algebra which is a quotient of the tensor algebra of $A$.  Its multiplication is defined in terms of the involution $\sigma$ and it is a twisted form of the even Clifford algebra associated to a quadratic space.  Together with its canonical involution, the Clifford algebra enjoys the structure of a central simple algebra with unitary, orthogonal, or symplectic involution, depending on its degree and the characteristic of $F$ \cite[Prop. 8.12]{MerkBook}. We will be interested in cases where this canonical involution is unitary, so the center of the Clifford algebra is an \'{e}tale quadratic extension of $F$.  The multiplicative group of $C(A, \sigma)$ contains a group $\Gamma(A, \sigma)$, called the \emph{Clifford group}, whose action on $C(A, \sigma)$ fixes $A \subset C(A, \sigma)$.  One defines a certain \emph{multiplier map} $\underline{\mu}: \Gamma(A, \sigma) \to F^{\times}$ whose kernel is called the \emph{spin group} of $(A, \sigma)$, denoted $\text{Spin}(A, \sigma)$. We refer the reader to \cite[$\S\S$ 8, 13]{MerkBook} for all pertinent definitions.

If $(A, \sigma)$ is an algebra with unitary involution of degree $n = 2m$, the \emph{discriminant algebra} of $(A, \sigma)$, denoted $D(A, \sigma)$, is a central simple algebra of degree ${ n \choose m}$ which plays the role of the the exterior algebra.  If $F^s$ denotes a separable closure of $F$, then $A_{F^s} \simeq \text{End}_{F^s} (V)$ and $D(A, \sigma)_{F^s} = \text{End}_{F^s}(\wedge ^m V)$.  The discriminant algebra comes equipped with a so-called canonical orthogonal involution $\underline{\sigma}$ induced by the involution $\sigma$ (see \cite[$\S$10.E]{MerkBook}).

Let us make precise the relationship between the Clifford, spin, and unitary groups arising from the exceptional identifications of algebras of  types $A_3$ and $D_3$.  We summarize the discussion found in \cite[$\S$15.D]{MerkBook}.

 Let $\mathsf{A}_3$ denote the groupoid of central simple algebras with unitary involution of degree 4, $\mathsf{D}_3$ the groupoid of central simple algebras $F$-algebras with orthogonal involution of degree 6.  There are functors $\delta: \mathsf{A}_3 \to \mathsf{D}_3$ and $\gamma: \mathsf{D}_3 \to \mathsf{A}_3$, where $\gamma$ defined by taking an algebra with orthogonal involution to its Clifford algebra with canonical involution, and $\delta$ is defined by taking an algebra with unitary involution to its discriminant algebra with canonical orthogonal involution.  These functors define an equivalence of groupoids \cite[Theorem 15.24]{MerkBook}.
 
 \begin{prop} [\cite{MerkBook}, Prop.15.27]\label{prop:excid}
 Let $(A, \sigma)$ and $(B, \tau)$ correspond to one another under the groupoid equivalence of $\mathsf{A}_3$ and $\mathsf{D}_3$.  Then we have identifications $\Gamma(A, \sigma) = \operatorname{SGU}(B, \tau)$ and $\operatorname{Spin}(A, \sigma) = \operatorname{SU}(B, \tau).$
 \end{prop}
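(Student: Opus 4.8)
The plan is to use that, under the groupoid equivalence, $(A,\sigma)$ is the orthogonal object of type $\mathsf{D}_3$ (degree $6$) while $(B,\tau)$ is its Clifford algebra with canonical involution, $(B,\tau)=(C(A,\sigma),\underline{\sigma})$, the unitary object of type $\mathsf{A}_3$ (degree $4$). Thus $B=C(A,\sigma)$, and all four groups $\Gamma(A,\sigma)$, $\operatorname{Spin}(A,\sigma)$, $\operatorname{SGU}(B,\tau)$, $\operatorname{SU}(B,\tau)$ are subgroups of $B^{\times}=C(A,\sigma)^{\times}$; more precisely they are the $F$-points of closed subgroup schemes of $\mathbf{GL}_1(B)$. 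I would prove the two asserted identities at the level of these subgroup schemes, since all the constructions involved (the Clifford algebra, its canonical involution, $\operatorname{Nrd}_B$, the multiplier, and the conjugation condition cutting out the Clifford group) commute with base change; taking $F$-points then yields the statement.

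First I would prove $\Gamma(A,\sigma)\subseteq\operatorname{SGU}(B,\tau)$ using the structure theory of the Clifford algebra, with no splitting hypothesis. The canonical involution $\underline{\sigma}=\tau$ satisfies $\underline{\sigma}(c)\,c\in F^{\times}$ for every $c$ in the Clifford group, so $\Gamma(A,\sigma)\subseteq\operatorname{GU}(B,\tau)$ with multiplier $\mu(c)=\underline{\sigma}(c)c$; this is standard. The essential point is then the reduced-norm identity $\operatorname{Nrd}_B(c)=\mu(c)^{2}$ for $c\in\Gamma(A,\sigma)$, which is exactly the condition defining $\operatorname{SGU}(B,\tau)$ in degree $2m=4$. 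For the reverse inclusion I would argue by dimension: after base change to a separable closure $F^{s}$ both $\mathbf{\Gamma}(A,\sigma)$ and $\mathbf{SGU}(B,\tau)$ are smooth and connected of dimension $16$---each being reductive with a one-dimensional central torus and $15$-dimensional semisimple derived subgroup---so the inclusion of smooth connected groups of equal dimension is an equality over $F^{s}$, and hence over $F$ by faithfully flat descent. This gives $\Gamma(A,\sigma)=\operatorname{SGU}(B,\tau)$.

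For $\operatorname{Spin}(A,\sigma)=\operatorname{SU}(B,\tau)$ I would pass to derived subgroups rather than match the cut-out conditions directly (which is delicate because of the $F^{\times}/F^{\times2}$ normalization of the spinor-norm map). As algebraic groups, $\mathbf{\Gamma}(A,\sigma)$ is reductive with one-dimensional central torus $\gm$ and simply connected derived subgroup $\mathbf{Spin}(A,\sigma)$; since $\mathbf{Spin}(A,\sigma)$ is perfect and central-by-$\gm$ in $\mathbf{\Gamma}(A,\sigma)$, one has $[\mathbf{\Gamma}(A,\sigma),\mathbf{\Gamma}(A,\sigma)]=\mathbf{Spin}(A,\sigma)$. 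Symmetrically $\mathbf{SU}(B,\tau)$ is the perfect derived subgroup of $\mathbf{SGU}(B,\tau)$, so $[\mathbf{SGU}(B,\tau),\mathbf{SGU}(B,\tau)]=\mathbf{SU}(B,\tau)$. Having already identified $\mathbf{\Gamma}(A,\sigma)=\mathbf{SGU}(B,\tau)$, taking derived subgroups gives $\mathbf{Spin}(A,\sigma)=\mathbf{SU}(B,\tau)$, and passing to $F$-points completes the proof.

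I expect the main obstacle to be the reduced-norm identity $\operatorname{Nrd}_B(c)=\mu(c)^{2}$, i.e.\ relating the reduced norm on the Clifford algebra $C(A,\sigma)$ to the Clifford multiplier $\underline{\sigma}(c)c$; this is where the concrete content of the exceptional identification sits. I would verify it by reducing to the split case $(A,\sigma)=(\operatorname{End}(V),\operatorname{ad}_q)$ with $\dim V=6$, where $C(A,\sigma)=C_0(V,q)\simeq\operatorname{End}(S^{+})\times\operatorname{End}(S^{-})$ for the two $4$-dimensional half-spinor modules, and checking the identity on products of anisotropic vectors, whose multiplier is a product of values of $q$; the remaining work is to confirm that this split-case computation and the half-spin identification are Galois-equivariant, so that descent to $F$ is legitimate. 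The dimension count and the passage to derived subgroups are comparatively formal.
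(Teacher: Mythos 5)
The paper does not actually prove this statement: Proposition~\ref{prop:excid} is quoted verbatim from the Book of Involutions (KMRT, Prop.~15.27) and used as a black box, so there is no internal proof to compare yours against. Judged on its own terms, your reconstruction is a sound and fairly standard way to establish the exceptional identification: (i) the inclusion $\Gamma(A,\sigma)\subseteq\operatorname{SGU}(B,\tau)$ via the multiplier together with the reduced-norm identity $\operatorname{Nrd}_{C(A,\sigma)}(c)=\underline{\mu}(c)^2$; (ii) equality by smoothness, connectedness, and the dimension count $16=16$; (iii) $\operatorname{Spin}=\operatorname{SU}$ by identifying both as the derived subgroup scheme of the common group, using that $\mathbf{\Gamma}/\mathbf{Spin}\simeq\gm\simeq\mathbf{SGU}/\mathbf{SU}$ via the respective multiplier maps and that $\mathbf{Spin}_6\simeq\mathbf{SL}_4$ is perfect. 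All of these steps check out, including the less obvious dimension claims ($\mathbf{SGU}$ has dimension $17-1=16$ because $\operatorname{Nrd}(g)\mu(g)^{-2}$ lands in the norm-one torus of the quadratic center, and $\mathbf{\Gamma}$ is a $\gm$-extension of the $15$-dimensional $\mathbf{SO}_6$).

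Two caveats. First, the entire arithmetic content of the proposition is concentrated in the identity $\operatorname{Nrd}_{C(A,\sigma)}(c)=\underline{\mu}(c)^2$, which you correctly isolate but only sketch; the generators-and-descent route you propose does work (over $F^s$ the even Clifford group is generated by scalars and products of an even number of anisotropic vectors by Cartan--Dieudonn\'e, and a polynomial identity on a smooth group can be checked on $F^s$-points), but the half-spin determinant computation still has to be carried out --- a sign needs to be pinned down, since the naive argument only gives $\det_{S^{\pm}}(v_1v_2)=\pm q(v_1)^2q(v_2)^2$. KMRT instead extracts this from the explicit isomorphism $C(D(B,\tau),\underline{\tau})\simeq B$ constructed in the proof of their Theorem~15.24, which avoids the generator check. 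Second, your derived-subgroup step implicitly uses the KMRT definition $\operatorname{Spin}(A,\sigma)=\{c\in\Gamma(A,\sigma)\mid \underline{\sigma}(c)c=1\}$; the present paper's informal description of $\operatorname{Spin}$ as the kernel of a map to $F^{\times}/F^{\times 2}$ would give a larger group, so you are right to work with the kernel of the $\gm$-valued multiplier at the level of group schemes.
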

 
\subsection{$R$-Equivalence} 
See \cite{ChernMerkSpin}. Let $G$ be an algebraic group over $F$.  A point $x \in G(F)$ is called $R$-\emph{trivial} if there is a rational morphism $f: \pone \dashrightarrow G$, defined at 0 and 1,  and with $f(0) = 1$ and $f(1) = x$.  The collection of all $R$-trivial elements of $G(F)$ is denoted $RG(F)$ and is a normal subgroup of $G(F)$.  If $H$ is a normal closed subgroup of $G$ then $RH(F)$ is a normal subgroup of $G(F)$ \cite[Lemma 1.2]{ChernMerkSU}.

The notion of $R$-equivalence is related to algebraic $K$-groups and $K$-cohomology groups, as we see in the following results.\\

\noindent \textbf{Example.}\label{K1Ex} See \cite{Vos}. Given a central simple algebra $A$, the abelianization map $ \text{GL}_1(A) = A^{\times} \to A^{\times}_{\text{ab}} = K_1(A)$ induces an isomorphism $$\text{GL}_1(A)/R\text{SL}_1(A) \simeq K_1(A).$$

\begin{thm}[\cite{ChernMerkSpin}]\label{thm:ChernMerkSpin}
Let $A$ be a central simple algebra over $F$ of even dimension and index at most 2 with quadratic pair $(\sigma, f)$, $X$ be the corresponding involution variety.  Then there is a canonical isomorphism $$\Gamma(A, \sigma, f) / R \operatorname{Spin}(A, \sigma, f) \simeq A_0(X, K_1).$$
\end{thm}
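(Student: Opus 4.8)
The plan is to establish the orthogonal analogue of the linear identification recorded in the Example above, in which $\text{GL}_1(A)/R\text{SL}_1(A) \simeq K_1(A) \simeq A_0(\SB(A), K_1)$. The Clifford group $\Gamma(A,\sigma,f)$ plays the role of $\text{GL}_1(A)$, the spin group $\operatorname{Spin}(A,\sigma,f)$ (the kernel of the multiplier map, hence the simply connected part) plays the role of $\text{SL}_1(A)$, and the involution variety $X$ replaces the Severi-Brauer variety. I would therefore construct a natural homomorphism $\Phi \colon \Gamma(A,\sigma,f) \to A_0(X, K_1)$, show that it is surjective, and identify its kernel with $R\operatorname{Spin}(A,\sigma,f)$; the induced isomorphism is the assertion of the theorem.

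To construct $\Phi$ I would imitate the transfer construction behind the Merkurjev-Suslin theorem. Because $\operatorname{IV}(A,\sigma) \subset \SB(A)$, any closed point $x \in X$ yields an $F(x)$-point of $\SB(A)$, so $A_{F(x)}$ is split and the involution becomes the adjoint involution of an isotropic quadratic form over $F(x)$. Over such a splitting field the Clifford group and its spinor-norm structure are explicit, and corestriction along $F(x)/F$ sends the local $K_1$-datum attached to $x$ into $\Gamma(A,\sigma,f)/R\operatorname{Spin}(A,\sigma,f)$; assembling these over all closed points and checking that the differential $d_X$ lands in $R\operatorname{Spin}$ yields the map. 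Equivalently, one may build $\Phi$ from the action of $\Gamma$ on $X$ via its vector representation, under which each element acts as a proper similitude of $(A,\sigma)$, preserving isotropy and hence acting as an automorphism of $X$. Multiplicativity of the multiplier map and of reduced norms makes $\Phi$ a homomorphism.

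For surjectivity I would combine the transitivity of the similitude action on the closed points of $X$ with the explicit computation of the $K$-cohomology of a quadric (Swan, Karpenko): over the function field $F(X)$ the algebra $A$ splits and $X$ becomes an isotropic quadric with known $A_0(-, K_1)$, and descent together with functoriality of the cycle module $K_*$ shows that every class $\sum(\alpha, x)$ is represented by an element of $\Gamma$, so that no unexpected cycles survive.

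The main obstacle is the exact determination of the kernel. One inclusion is geometric and comparatively soft: given $g \in R\operatorname{Spin}(A,\sigma,f)$, a rational morphism $h \colon \pone \dashrightarrow \operatorname{Spin}(A,\sigma,f)$ with $h(0)=1$ and $h(1)=g$ sweeps out a family of zero-cycles on $X$ whose fibres over $0$ and $1$ differ by the boundary of a $K_2$-class, so $\Phi(g)=0$. The reverse inclusion — that any element of $\Gamma$ dying in $A_0(X,K_1)$ is already $R$-trivial in $\operatorname{Spin}$ — is the technical heart of the argument: it requires matching rational equivalence of zero-cycles on $X$ with $R$-equivalence in the spin group. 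I expect this to proceed by a specialization argument along the generic fibre, where $A$ splits and the problem reduces to $R$-triviality in the split spin group, followed by a descent controlling the difference between the split and twisted situations. This comparison of the two notions of equivalence is where the bulk of the work in \cite{ChernMerkSpin} resides.
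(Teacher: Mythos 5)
The paper does not prove this statement at all: it is quoted directly from Chernousov--Merkurjev \cite{ChernMerkSpin} (note the citation in the theorem header) and is used as a black box in the proof of Proposition~\ref{prop:deg4}. There is therefore no internal proof to compare against; your proposal has to be judged as a reconstruction of the argument in \cite{ChernMerkSpin} itself, and as such it is a roadmap rather than a proof.

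Concretely, each of the three steps you isolate is asserted rather than carried out. First, the map $\Phi$ is never actually constructed: you offer two candidate constructions (a corestriction over closed points of $X$, and the action of $\Gamma$ on $X$ through proper similitudes) without executing either or showing they agree, and the claim that the image of the differential $d_X$ ``lands in $R\operatorname{Spin}$'' --- which is exactly the well-definedness of $\Phi$ on rational equivalence classes --- requires an explicit computation with residues of $K_2$-symbols that is not supplied. The transfer direction is also delicate because the Clifford group is not functorial under finite extensions in the naive way, so compatibility of the local $K_1$-data with corestriction is itself a substantive point. Second, surjectivity ``by descent together with functoriality'' is a one-sentence gesture at what is really a computation of $A_0(-,K_1)$ for an isotropic quadric over $F(X)$ followed by a nontrivial specialization back to $F$. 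Third, you explicitly defer the identification of the kernel with $R\operatorname{Spin}(A,\sigma,f)$ to \cite{ChernMerkSpin}; since you correctly identify this as the technical heart of the theorem, the proposal locates the difficulty but does not resolve it. In short: the outline is consistent with the known strategy, but none of the load-bearing steps is established.
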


\section{Reduction to Algebras of Square Degree}  
Given a prime integer $p$ and an algebra $A$ of index $p^2$, we reduce the computation of $K_1$-zero-cycles of $\SB_p(A)$ to that of $\SB_p(D)$, where $D$ is the underlying division algebra of $A$.  For $p =2$, this reduction to algebras of degree 4 will allow the use of involution varieties in the proof of the main theorem.

For $J$ a right ideal of $A$, define $\SB_k(J)$ as the collection of right ideals of $A$ of reduced dimension $k$ which are contained in $J$ \cite[Def. 4.7]{KrashZeroCycles}.

\begin{lem}\label{lem:red}
Let $p$ be a prime integer and let $A = M_n(D)$ be a central simple algebra of index $p^2$.  Let $X = \SB_p(A)$, and $Y =   \SB_p(D)$.  There is an isomorphism $A_0(X, K_1) \simeq A_0(Y, K_1)$.
\end{lem}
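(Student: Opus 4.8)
The plan is to induct on $n$, cutting $X = \SB_p(A)$ by incidence with a fixed $F$-rational subideal so that each step replaces $M_n(D)$ by $M_{n-1}(D)$. Write $A = M_n(D) = \operatorname{End}_D(V)$ with $V = D^n$, and let $e \in A$ be the idempotent projecting onto a $D$-hyperplane $H \subset V$, so that $eAe \cong M_{n-1}(D)$ and the right ideal $eA$ is defined over $F$. For $0 \le j \le p$ I set
$$Z_j = \{\, I \in X \mid I \cap eA \text{ has reduced dimension at least } j \,\},$$
which, since $eA$ is $F$-rational and the intersection dimension is upper semicontinuous, is a closed $F$-subvariety; this produces a filtration $X = Z_0 \supseteq Z_1 \supseteq \cdots \supseteq Z_p$. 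The deepest stratum $Z_p = \{\, I \mid I \subseteq eA \,\}$ is precisely Krashen's subideal variety $\SB_p(eA)$, which I would identify with $\SB_p(eAe) = \SB_p(M_{n-1}(D))$ (essentially by $I \mapsto Ie$; over a separable closure both are the Grassmannian $\gr(p,(n-1)p^2)$).

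Next I would show that for $0 \le j \le p-1$ the locally closed stratum $S_j = Z_j \setminus Z_{j+1}$ carries no bottom $K_1$-cohomology. The assignment $I \mapsto (I \cap eA,\, \im(I \to A/eA))$ realizes $S_j$ as an affine bundle (the total space of a $\operatorname{Hom}$-bundle, in particular Zariski-locally trivial) over a base $B_j$ assembled from $\SB_j(M_{n-1}(D))$ and $\SB_{p-j}(D)$, the fibre recording the graph data of a lift. A dimension count shows that the rank $r_j$ of this bundle is at least $2$ for all such $j$ once $n \ge 2$. Rost's homotopy invariance then yields $A_i(S_j, K_1) \cong A_{i-r_j}(B_j, K_1)$, whence $A_0(S_j, K_1) = A_1(S_j, K_1) = 0$.

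Finally I would run the localization long exact sequences
$$\cdots \to A_1(S_j, K_1) \to A_0(Z_{j+1}, K_1) \to A_0(Z_j, K_1) \to A_0(S_j, K_1) \to 0.$$
The vanishing of the two outer terms forces $A_0(Z_{j+1}, K_1) \isomto A_0(Z_j, K_1)$ for each $j$, and composing up the filtration gives $A_0(X, K_1) \cong A_0(Z_p, K_1) = A_0(\SB_p(M_{n-1}(D)), K_1)$. Descending induction on $n$, with base case $M_1(D) = D$, then produces the desired isomorphism $A_0(X, K_1) \simeq A_0(Y, K_1)$. I expect the \emph{main obstacle} to be the middle step: descending the Schubert-type affine fibration of each stratum to the twisted $F$-forms and confirming it is a genuine Zariski-locally trivial affine bundle to which homotopy invariance applies, together with the uniform bound $r_j \ge 2$. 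Once these geometric facts are in hand, the collapse of the localization sequences and the induction are formal.
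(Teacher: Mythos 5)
Your argument is genuinely different from the paper's and, in outline, viable. The paper does the reduction in a single step: fix an ideal $J$ of reduced dimension $p^2$ with idempotent $e_J$, use Krashen's identification $\SB_p(J) = \SB_p(e_JAe_J) = \SB_p(D)$ to obtain a rational map $\varphi_J\colon \SB_p(A) \dashrightarrow \SB_p(D)$, $I \mapsto e_J I$, observe that over $L = F(\SB_p(D))$ the variety $\SB_p(A_L)$ acquires a rational point (since $\operatorname{ind}(A_L) \mid p$) and is therefore rational by Blanchet's theorem, and conclude from birational invariance of $A_0(-,K_1)$ on smooth projective varieties together with the projective bundle theorem. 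You instead stratify by incidence with the rational ideal $eA$, kill the intermediate strata by homotopy invariance and localization, and peel off one matrix factor per induction step. Your route avoids importing the birational invariance of $A_0(-,K_1)$, at the cost of a more delicate geometric input; the paper's route is shorter and pushes essentially all of the geometry into two citations.

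That said, the step you flag as the main obstacle is a genuine gap, and it is where all the work lives: you must show that $S_j \to B_j$ is, over $F$ and not merely over a separable closure, a torsor under a vector bundle (hence Zariski-locally trivial, so that Rost's homotopy invariance applies). Concretely, the split fibration is a torsor under $\mathcal{H}om(\mathcal{T}_2, \mathcal{W}/\mathcal{T}_1)$, and one must check that this sheaf descends to the twisted product $\SB_j(eAe) \times \SB_{p-j}\bigl((1-e)A(1-e)\bigr)$; it does, because the centre of the structure group stabilizing $eA$ acts on it with weight zero, but this verification is the crux of your argument and cannot be waved at. Two smaller corrections: the homotopy invariance isomorphism should read $A_i(S_j, K_1) \simeq A_{i-r_j}(B_j, K_{1+r_j})$ --- the coefficient degree shifts along with the dimension --- although the vanishing for $i < r_j$ is unaffected; and the rank is $r_j = (p-j)\bigl((n-1)p^2 - j\bigr) \geq p^2 - p + 1 \geq 3$ for $n \geq 2$ and $0 \leq j \leq p-1$, so the bound $r_j \geq 2$ that you need does hold.
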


\begin{proof}
Fix an ideal $J$ of $ A$ of reduced dimension $p^2$, the existence of which follows from $\text{ind}(A) = p^2$, and let $e_J$ be the corresponding idempotent element of $A$ \cite[\S 12]{MerkBook}.  Define a rational map $\varphi _J: \SB_p(A) \dashrightarrow \SB_p(J)$ by the association $I \mapsto e_J I \subset J$, for any ideal $I $ of reduced dimension $p$.  The map $\varphi _J$ is defined on the open locus consisting of ideals $I$ satisfying $\text{rdim}(e_J I) = p.$  

By \cite[Thm. 4.8]{KrashZeroCycles}, the algebra $D:= e_J A e_J$ is degree $p^2$, Brauer-equivalent to $A$ and satisfies $\SB_p(J) = \SB_p(D)$.  Since $\text{ind}(A)= p^2$, the algebra $D$ is division and $A = M_n(D)$.  We denote the resulting map $\SB_p(A) \dashrightarrow \SB_p(D)$ also by $\varphi _J$.

Let $\eta$ be the generic point of $\SB_p(D)$ and take $L = F(\eta)$.  Let $\mathfrak{f}$ be the generic fiber of $\varphi _J$, i.e., $\mathfrak{f} = \SB_p(A)_{L} = \SB_p(A_{L})$ is the scheme-theoretic fiber over $\eta$. We first show that $\mathfrak{f}$ is a rational $L$-variety.  The field $L$ satisfies $\text{ind} (D_{L}) \mid p$.  Since $D$ is Brauer-equivalent to $A$, we have $\text{ind}(A_{L}) \mid p$, so that $\SB_p(A)$ has an $L$-rational point.  By \cite[Prop. 3]{Blanch}, the function field of $\SB_p(A)_{L}$ is purely transcendental over $L$, so that $\mathfrak{f}= \SB_p(A)_{L}$ is rational.  Therefore, $\SB_p(D)_{\mathfrak{f}}$ is birational to $\SB_p(D) \times \mathbb{P}_L ^{\dim \mathfrak{f}}$.  

The group of $K_1$-zero-cycles is an invariant of smooth projective varieties \cite[Cor. RC.13]{KM}, so that we have an isomorphism $$A_0(\SB_p(D)_{\mathfrak{f}}, K_1) \simeq A_0(\SB_p(D) \times \mathbb{P}_L ^{\dim \mathfrak{f}}, K_1).$$ This isomorphism in conjunction with the Projective Bundle Theorem \cite[Theorem 53.10]{EKM}, yields $$A_0(\SB_p(D) _{\mathfrak{f}}, K_1) \simeq A_0(\SB_p(D), K_1). $$ The variety $\SB_p(A)$ is birational to $\SB_p(D)_{ \mathfrak{f}}$, isomorphic along the open locus of definition of $\varphi _J$.  Again using that $A_0(-, K_1)$ is a birational invariant, and combining this with the above isomorphisms gives $A_0(\SB_p(A), K_1) \simeq A_0(\SB_p(D), K_1).$

\end{proof}

\section{Main Result}

Having reduced our computation to the case of an algebra of degree 4, we utilize a result of Krashen \cite{KrashZeroCycles} to transfer the computation of zero-cycles of the second generalized Severi-Brauer variety to that of an involution variety.  This result also guarantees that the involution variety of interest comes from an algebra of index no greater than 2.  We then make use of Theorem~\ref{thm:ChernMerkSpin} to translate this computation into an analysis of those algebraic groups defined in $\S 3$.

\begin{prop}\label{prop:deg4}
Let $A$ be a central simple algebra of degree 4 over a field $F$ and let $X$ be the second generalized Severi-Brauer variety of $A$.  There is a group isomorphism $$A_0(X, K_1) \simeq \{(x, \alpha) \in K_1(A) \times F^{\times} \mid \operatorname{Nrd}_A(x) = \alpha ^2 \}.$$
\end{prop}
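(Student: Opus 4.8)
The plan is to transport the computation to an involution variety, apply the Chernousov--Merkurjev description of $K_1$-zero-cycles (Theorem~\ref{thm:ChernMerkSpin}) and the exceptional identification of groups (Proposition~\ref{prop:excid}), and then reduce everything to the unitary groups of $A \times A^{\text{op}}$, whose explicit descriptions from Section~3 give the answer. First I would invoke Krashen's result \cite{KrashZeroCycles} to identify $X = \SB_2(A)$, up to isomorphism, with an involution variety $\text{IV}(B,\tau)$, where $(B,\tau)$ is a central simple $F$-algebra with orthogonal involution of degree $6$; concretely one may take $B = \lambda^2 A$ with its canonical involution $\tau$. Since $[B] = 2[A]$ in $\Br(F)$ satisfies $2\cdot 2[A] = 4[A] = 0$, the class $[B]$ has exponent dividing $2$, so $\operatorname{ind}(B)$ is a power of $2$; as it also divides $\deg B = 6$, we conclude $\operatorname{ind}(B) \le 2$. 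This is exactly the hypothesis of Theorem~\ref{thm:ChernMerkSpin} (in characteristic $\neq 2$ a quadratic pair is just an orthogonal involution), which yields a canonical isomorphism $A_0(X, K_1) \simeq \Gamma(B,\tau)/R\operatorname{Spin}(B,\tau)$.

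Next I would run these groups through the exceptional identification. The algebra $(B,\tau)$ lies in $\mathsf{D}_3$ and corresponds under the groupoid equivalence to its Clifford algebra $(A \times A^{\text{op}}, \varepsilon) \in \mathsf{A}_3$, an algebra with unitary involution whose center is $F \times F$ and whose degree is $4 = 2m$ with $m = 2$. Proposition~\ref{prop:excid} then supplies identifications of algebraic groups $\Gamma(B,\tau) = \operatorname{SGU}(A\times A^{\text{op}}, \varepsilon)$ and $\operatorname{Spin}(B,\tau) = \operatorname{SU}(A \times A^{\text{op}}, \varepsilon)$. Being identifications of algebraic groups, they carry $R$-trivial elements to $R$-trivial elements and respect the inclusions $\operatorname{Spin} \subseteq \Gamma$ and $\operatorname{SU} \subseteq \operatorname{SGU}$, so $R\operatorname{Spin}(B,\tau)$ is carried onto $R\operatorname{SU}(A\times A^{\text{op}},\varepsilon)$ and
\[
A_0(X, K_1)\ \simeq\ \operatorname{SGU}(A\times A^{\text{op}},\varepsilon)\,/\,R\operatorname{SU}(A\times A^{\text{op}},\varepsilon).
\]
Using the explicit formulas of Section~3 with $m = 2$, the numerator is $\{(x,\alpha)\in A^\times \times F^\times : \operatorname{Nrd}_A(x) = \alpha^2\}$, while the denominator is $R\operatorname{SL}_1(A)$, embedded as $\{(r,1): r\in R\operatorname{SL}_1(A)\}$.

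It then remains to compute this quotient. Since $\operatorname{Nrd}_A$ vanishes on commutators and $R\operatorname{SL}_1(A)\subseteq \operatorname{SL}_1(A) = \ker \operatorname{Nrd}_A$, the reduced norm descends to a homomorphism $K_1(A) \to F^\times$ on $K_1(A) = A^\times/R\operatorname{SL}_1(A)$ (the isomorphism $\operatorname{GL}_1(A)/R\operatorname{SL}_1(A)\simeq K_1(A)$ of Section~3). I would consider the homomorphism $\operatorname{SGU}(A\times A^{\text{op}},\varepsilon)\to K_1(A)\times F^\times$ sending $(x,\alpha)$ to $(\bar x, \alpha)$. It is surjective onto $\{(\bar x,\alpha): \operatorname{Nrd}_A(\bar x)=\alpha^2\}$, since any such pair lifts by choosing any representative $x\in A^\times$ of $\bar x$ (the reduced norm depends only on $\bar x$, so $\operatorname{Nrd}_A(x)=\alpha^2$ and $(x,\alpha)\in\operatorname{SGU}$); and its kernel is exactly $\{(r,1): r\in R\operatorname{SL}_1(A)\} = R\operatorname{SU}(A\times A^{\text{op}},\varepsilon)$. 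Passing to the quotient identifies $A_0(X,K_1)$ with the group of pairs $(x,\alpha)\in K_1(A)\times F^\times$ satisfying $\operatorname{Nrd}_A(x)=\alpha^2$, as claimed.

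The main obstacle here is not a single hard estimate but the bookkeeping: one must check that the entire chain of canonical isomorphisms is compatible with $R$-equivalence and with the two inclusions, so that the quotient produced by Theorem~\ref{thm:ChernMerkSpin} genuinely matches $\operatorname{SGU}/R\operatorname{SU}$. The one substantive input is the verification that Krashen's involution variety arises from an algebra of index at most $2$, without which Theorem~\ref{thm:ChernMerkSpin} does not apply; this is precisely where the degree-$4$ hypothesis is used decisively. It is also worth emphasizing that the exponent $m = 2$ of the unitary algebra $A\times A^{\text{op}}$ is exactly what produces the square-root-of-the-reduced-norm condition $\operatorname{Nrd}_A(x) = \alpha^2$.
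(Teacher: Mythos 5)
Your proposal is correct and follows essentially the same route as the paper: Krashen's identification of $\SB_2(A)$ with the involution variety of $(\lambda^2 A,\underline{\varepsilon})$, the index bound $\operatorname{ind}(B)\le 2$, Theorem~\ref{thm:ChernMerkSpin}, the exceptional identification of Proposition~\ref{prop:excid} with $(A\times A^{\operatorname{op}},\varepsilon)$, and the explicit descriptions of $\operatorname{SGU}$ and $\operatorname{SU}$ from Section~3. The only cosmetic difference is that the paper verifies the $\mathsf{A}_3$--$\mathsf{D}_3$ correspondence by computing the discriminant algebra of $(A\times A^{\operatorname{op}},\varepsilon)$ and matching involutions via the wedge pairing, whereas you assert the equivalent statement about the Clifford algebra of $(B,\tau)$; your final quotient computation is a slightly more explicit version of the paper's.
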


\begin{proof}

By \cite[Lem. 6.5]{KrashZeroCycles}, $\SB_2(A)$ is isomorphic to the involution variety $\operatorname{IV}(B, \sigma)$ of a degree 6 algebra $B$ with orthogonal involution $\sigma$.  In particular, $$A_0(\text{SB}_2(A), K_1) = A_0(\text{IV}(B, \sigma), K_1).$$  Moreover, $B$ is Brauer-equivalent to $A^{\otimes 2}$, so that $\operatorname{ind }(B) \leq 2$.  The involution $\sigma$ is obtained from the bilinear form $$\wedge ^2 V \times \wedge ^2 V \to \wedge ^4 V \simeq F$$ by descent.  Consider the algebra $(A\times A^{\text{op}}, \varepsilon)$ over $F \times F$ with unitary involution defined by exchanging factors.  Its discriminant algebra $D(A \times A^{\text{op}}, \varepsilon)$ is given by $\lambda ^2 A$ \cite[10.31]{MerkBook}, which has degree 6 and is Brauer-equivalent to $A ^{\otimes 2}$ \cite[10.4]{MerkBook}.  The canonical involution $\underline{\varepsilon}:= \varepsilon^{\wedge 2}$ \cite[10.31]{MerkBook} on $\lambda ^2 A$ is also induced by the bilinear form $\wedge ^2 V \times \wedge ^2  V \to \wedge ^4 V \simeq F$  \cite[Def. 10.11]{MerkBook}, yielding an isomorphism $(B, \sigma) \simeq (D(A \times A^{\text{op}}, \varepsilon), \underline{\varepsilon})$ of algebras with orthogonal involution.  Thus, $(B, \sigma)$ and $(A\times A^{\text{op}}, \varepsilon)$ correspond to one another under the groupoid equivalence of $\mathsf{A}_3 $ and $\mathsf{D}_3$.

Since $\text{ind}(B) \leq 2$, Theorem~\ref{thm:ChernMerkSpin} yields a canonical isomorphism \begin{equation} A_0(\text{IV}(B, \sigma), K_1) \simeq \Gamma(B,\sigma)/R\operatorname{Spin}(B, \sigma).\end{equation}\label{eq:CliffSpin} 
As $(B, \sigma) \in \mathsf{D}_3$ corresponds to $(A \times A^{\text{op}}, \varepsilon) \in \mathsf{A}_3$, there are exceptional identifications $\Gamma(B, \sigma) = \text{SGU}(A \times A^{\text{op}}, \varepsilon)$ and $ \text{Spin}(B, \sigma) = \text{SU}(A \times A^{\text{op}}, \varepsilon)$, as noted in Proposition~\ref{prop:excid}.  Furthermore, the discussion in $\S$3.1 gives $$ \text{SGU}(A \times A^{\text{op}}, \varepsilon) = \{(x, \alpha) \in A^{\times} \times F^{\times} \mid \text{Nrd}_A(x) = \alpha ^2\}$$ $$\text{SU}(A \times A^{\text{op}}, \varepsilon)= \text{SL}_1(A)$$ with the inclusion of the latter given by inclusion into the first factor $x \mapsto (x, 1)$.  The quotient in ~\ref{eq:CliffSpin}.1 can then be identified as $$\{(x, \alpha) \in A ^{\times} \times F^{\times} \mid \text{Nrd}_A(x) = \alpha ^2\}/ R\text{SL}_1(A),$$ and therefore consists of elements $ x \in A^{\times}/R\text{SL}_1(A) = \text{GL}_1(A)/R\text{SL}_1(A) = K_1(A)$ together with a square-root $\alpha$ of $\text{Nrd}_A(x)$ in $F^{\times}$.
\end{proof}

\begin{thm}\label{thm:mainthm}
Let $A$ be a central simple algebra of index 4 over a field $F$, and let $X$ be the second generalized Severi-Brauer variety of $A$.  There is a group isomorphism $$A_0(X, K_1) \simeq \{(x, \alpha) \in K_1(A) \times F^{\times} \mid \operatorname{Nrd}_A(x) = \alpha ^2 \}.$$
\end{thm}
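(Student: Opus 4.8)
The plan is to deduce the general index-$4$ statement from the division-algebra case already settled in Proposition~\ref{prop:deg4}, using the reduction Lemma~\ref{lem:red} with $p = 2$. First I would write $A = M_n(D)$, where $D$ is the underlying division algebra of $A$. Since $\operatorname{ind}(A) = 4$, the algebra $D$ has degree $4$ and is division, which is precisely the hypothesis of Lemma~\ref{lem:red} for the prime $p = 2$. Applying that lemma yields a canonical isomorphism
\[
A_0(\SB_2(A), K_1) \simeq A_0(\SB_2(D), K_1).
\]

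Next I would apply Proposition~\ref{prop:deg4} directly to $D$, which has degree $4$. This identifies $A_0(\SB_2(D), K_1)$ with the group of pairs $(x, \alpha) \in K_1(D) \times F^{\times}$ satisfying $\operatorname{Nrd}_D(x) = \alpha^2$. Composing the two identifications already describes $A_0(X, K_1)$, albeit in terms of the division algebra $D$ rather than $A$ itself.

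The one remaining step is to rewrite this description in terms of $A$, and this is where the only real bookkeeping lies. Because $A = M_n(D)$ is Brauer-equivalent to $D$, Morita invariance of $K_1$ gives a canonical isomorphism $K_1(A) \simeq K_1(D)$, and under this identification the reduced norm $\operatorname{Nrd}_A$ corresponds to $\operatorname{Nrd}_D$, since both are computed on the common underlying division algebra. Transporting the pair-description along this isomorphism replaces $(K_1(D), \operatorname{Nrd}_D)$ by $(K_1(A), \operatorname{Nrd}_A)$ and produces exactly the asserted identification.

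I expect the main (and essentially only) subtlety to be confirming the compatibility of reduced norms under Morita equivalence, namely that the square-root condition $\operatorname{Nrd}_D(x) = \alpha^2$ transports to $\operatorname{Nrd}_A(x) = \alpha^2$ without introducing a spurious $n$-th power correction. This follows from the standard normalization of the reduced norm, which is insensitive to passage between an algebra and its matrix algebras; once this compatibility is in place, the theorem is an immediate consequence of Lemma~\ref{lem:red} and Proposition~\ref{prop:deg4}.
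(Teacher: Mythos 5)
Your proposal is correct and follows exactly the paper's own argument: the paper likewise combines Lemma~\ref{lem:red} (with $p=2$) and Proposition~\ref{prop:deg4} applied to the underlying division algebra $D$, and then notes that the reduced norm respects the canonical isomorphism $K_1(A) = K_1(M_n(D)) = K_1(D)$. The only difference is that you spell out the Morita-compatibility of the reduced norm in more detail than the paper's one-line remark.
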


\begin{proof} 
The reduced norm respects the canonical isomorphism $K_1(A) = K_1(M_n(D)) = K_1(D)$.  Combining the isomorphisms of Lemma~\ref{lem:red} and Proposition~\ref{prop:deg4} yields the desired result.
\end{proof}

\begin{rem}  If the cohomological dimension of $F$ is at most 2, Theorem~\ref{thm:mainthm} yields an isomorphism $A_0(X, K_1) \simeq F^{\times}$.  Indeed, if the reduced norm is surjective, the pair $(x, a) \in K_1(A) \times F^{\times}$ is completely determined by $a$ since $x = \text{Nrd}_A ^{-1}(a^2)$.  The surjectivity of the reduced norm is guaranteed by $\text{cd}(F) \leq 2$  \cite[Thm. 24.8]{Suslin} (also see \cite[Ex. III.3.2.a]{Serre}).  Examples of such fields include imaginary number fields and function fields of complex surfaces.  We thank the referee for this comment.
\end{rem}

\end{document}